\documentclass[
final
]{dmtcs-episciences}

\usepackage{amsmath}
\usepackage{amsfonts}
\usepackage{amsthm}

\usepackage{algpseudocode,algorithm,algorithmicx}

\algrenewcommand\algorithmicrequire{\textbf{Precondition:}}
\algrenewcommand\algorithmicensure{\textbf{Postcondition:}}

\newtheorem{theorem}{Theorem}
\newtheorem{proposition}{Proposition}[theorem]
\newtheorem{corollary}{Corollary}[theorem]
\newtheorem{lemma}[theorem]{Lemma}

\newtheorem*{problem}{Problem}
\newtheorem{fact}[theorem]{Fact}

\theoremstyle{definition}
\newtheorem{definition}{Definition}
 
\theoremstyle{remark}

\newcommand{\ZZ}{\mathbb{Z}}
\newcommand{\NN}{\mathbb{N}}
\newcommand{\PP}{\mathbb{P}}
\newcommand{\RR}{\mathbb{R}}
\newcommand{\EE}{\mathbb{E}}

\newcommand{\cC}{\mathcal{C}}
\newcommand{\goesto}{\rightarrow}

\title{On Weakly Distinguishing Graph Polynomials}

\author{
  Johann A. Makowsky\affiliationmark{1}
  \and
  Vsevolod Rakita\affiliationmark{2}
}
\date{\today}
\affiliation{
  Technion - Israel Institute of Technology, Department of Computer Science\\
  Technion - Israel Institute of Technology, Department of Mathematics}
\keywords{Graph Theory, Graph Polynomials}

\received{2018-11-1}

\revised{2019-3-05}

\accepted{2019-3-25}

\begin{document}

\publicationdetails{21}{2019}{1}{4}{4949}
\maketitle

\begin{abstract}
A univariate graph polynomial \begin{math} P(G;X) \end{math} is weakly distinguishing if for almost all finite graphs 
\begin{math}G\end{math} there is a finite graph \begin{math}H\end{math} with \begin{math}P(G;X)=P(H;X)\end{math}. 
We show that the clique polynomial and the independence 
polynomial are weakly distinguishing. Furthermore, 
we show that generating functions of induced subgraphs with property \begin{math}C\end{math} are weakly distinguishing 
provided that \begin{math}C\end{math} is of bounded degeneracy or treewidth. The same holds for the harmonious chromatic polynomial. 
\end{abstract}

\section{Introduction and Outline}
Throughout this paper we consider only simple (i.e. finite, undirected loopless graphs without parallel edges), vertex labelled graphs.
Let \begin{math}P\end{math} be a graph polynomial.
A graph \begin{math}G\end{math} is {\em \begin{math}P\end{math}-unique} if every graph \begin{math}H\end{math} with \begin{math}P(G;X)=P(H;X)\end{math} is isomorphic to \begin{math}G\end{math}.
A graph {\em \begin{math}H\end{math} is a \begin{math}P\end{math}-mate} of \begin{math}G\end{math} 
if \begin{math}P(G;X)=P(H;X)\end{math} but \begin{math}H\end{math} is not isomorphic to \begin{math}G\end{math}. 
In \cite{noy2003graphs} \begin{math}P\end{math}-unique graphs are studied for the Tutte polynomial \begin{math}T(G;X,Y)\end{math}, 
the chromatic polynomial \begin{math}\chi(G;X)\end{math}, the matching polynomial \begin{math}m(G;X)\end{math} and the
characteristic polynomial \begin{math}char(P;X)\end{math}.

A statement holds for almost all graphs if the proportion of graphs of order \begin{math}n\end{math} 
for which it holds, tends to \begin{math}1\end{math}, when
\begin{math}n\end{math} tends to infinity.
A graph polynomial \begin{math}P\end{math} is 
{\em almost complete} if almost all graphs \begin{math}G\end{math} are \begin{math}P\end{math}-unique, and it is
{\em weakly distinguishing} if almost all graphs \begin{math}G\end{math} have a \begin{math}P\end{math}-mate.
In \cite{bollobas2000contraction} it is conjectured that almost all graphs are \begin{math}\chi\end{math}-unique and  \begin{math}T\end{math}-unique,
in other words, both \begin{math}\chi(G;X)\end{math} and \begin{math}T(G;X,Y)\end{math} are almost complete.
There are plenty of trivial graph polynomials which are weakly distinguishing, like 
\begin{math}X^{|V(G)|}\end{math} or \begin{math}X^{|E(G)|}\end{math}. However, one might expect that the prominent graph polynomials  
from the literature are not weakly distinguishing.
Here we show that various non-trivial graph polynomials are still weakly distinguishing.


The degree polynomial \begin{math}Deg(G;x)\end{math} of a graph \begin{math}G\end{math} 
is the generating function of the degree sequence of $G$.
A graph \begin{math}G\end{math} is \begin{math}Deg\end{math}-unique, also called in the literature
a {\em unigraph},
if it is determined by its degree sequence. 
An updated discussion on how to
recognize unigraphs can be found in \cite{borri2009recognition}. 

A simple counting argument gives:
\begin{theorem}
\label{theorem deg}
Almost all graphs \begin{math}G\end{math} have a \begin{math}Deg\end{math}-mate.
\end{theorem}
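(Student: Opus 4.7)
The plan is a standard double-counting: bound the number of labelled unigraphs on $n$ vertices from above and compare it with the total number $2^{\binom{n}{2}}$ of labelled graphs on $n$ vertices.

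First I would observe that if $G$ is a $Deg$-unique (labelled) graph on $n$ vertices, then the isomorphism class $[G]$ is the \emph{only} isomorphism class having the degree sequence of $G$. Hence the map from isomorphism classes of unigraphs on $n$ vertices to degree sequences is injective. The number of degree sequences on $n$ vertices is trivially bounded by $n^n$, since each of the $n$ degrees lies in $\{0,1,\dots,n-1\}$ (or by $\binom{2n-1}{n}$ if one prefers to sort them). Therefore the number $u_n$ of isomorphism classes of unigraphs on $n$ vertices satisfies $u_n \le n^n$.

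Next I would pass from unlabelled to labelled graphs. Each isomorphism class contributes at most $n!$ labellings, so the number $U_n$ of labelled unigraphs on $n$ vertices satisfies
\[
U_n \;\le\; u_n \cdot n! \;\le\; n^n \cdot n! \;=\; 2^{O(n \log n)}.
\]
On the other hand, the total number of labelled graphs on $n$ vertices is $2^{\binom{n}{2}} = 2^{\Theta(n^2)}$. Dividing,
\[
\frac{U_n}{2^{\binom{n}{2}}} \;\le\; \frac{2^{O(n\log n)}}{2^{\binom{n}{2}}} \;\longrightarrow\; 0 \quad \text{as } n \to \infty.
\]
Thus the proportion of $n$-vertex graphs that are unigraphs tends to $0$, which is exactly the statement that almost all graphs have a $Deg$-mate.

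There is no real obstacle: the only thing to be careful about is the distinction between labelled and unlabelled counting, which is why the factor $n!$ appears. Both bounds are extremely crude, and the argument has plenty of room to spare since $\binom{n}{2}$ grows faster than $n\log n$; this is why the theorem is called ``a simple counting argument''.
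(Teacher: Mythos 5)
Your proposal is correct and is essentially the paper's own argument: both bound the number of possible degree sequences (equivalently, degree polynomials) by roughly $n^n$, compare against the $2^{\binom{n}{2}}$-type count of all graphs, and absorb an $n!$ factor to pass between labelled and unlabelled counting. The only cosmetic difference is that the paper works with isomorphism classes and $|\mathcal{G}(n)|\approx 2^{\binom{n}{2}}/n!$ while you work with labelled graphs and multiply by $n!$; the resulting computation is identical.
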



The {\em Independence and  Clique polynomials} of a graph \begin{math}G=(V(G),E(G))\end{math} contain much information about $G$. 
Both were first studied in \cite{hoede1994clique}. For a more recent survey on the independence polynomial see \cite{levit2005independence}.

\begin{theorem}
\label{theorem clique ind}
The independence and clique polynomials are weakly distinguishing.
\end{theorem}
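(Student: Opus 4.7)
The plan is a pigeonhole argument exploiting the fact that the independence polynomial of a typical random graph has very few nonzero coefficients. First I would invoke the classical estimate that almost all labelled graphs $G$ on $n$ vertices satisfy $\alpha(G) \leq 2\log_2 n$. For such $G$, the polynomial $I(G;X)=\sum_{k=0}^{\alpha(G)} i_k X^k$ has degree at most $2\log_2 n$, and each coefficient satisfies $i_k \leq \binom{n}{k}$.

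Next I would count the number of possible polynomials of this shape: it is at most $\prod_{k=0}^{2\log_2 n}\bigl(\binom{n}{k}+1\bigr) = 2^{O(\log^3 n)}$. A labelled graph $G$ is $I$-unique exactly when its isomorphism class is the only class of graphs on $n$ vertices with polynomial $I(G;X)$, and each isomorphism class contains at most $n!$ labelled graphs. Hence the number of labelled graphs on $n$ vertices that are $I$-unique and additionally satisfy $\alpha(G)\leq 2\log_2 n$ is at most $n!\cdot 2^{O(\log^3 n)} = 2^{O(n\log n)}$, which is $o(2^{\binom{n}{2}})$. Combined with the random-graph estimate on $\alpha$, this shows that the proportion of $I$-unique graphs tends to $0$, so the independence polynomial is weakly distinguishing.

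For the clique polynomial I would use the identity $C(G;X)=I(\overline{G};X)$. Since complementation is an involution on labelled graphs, the class of ``almost all graphs'' is preserved under it, so almost every $G$ has the property that $\overline{G}$ admits an $I$-mate $K$; then $\overline{K}$ is a $C$-mate of $G$. The main obstacle in this plan is only cosmetic: one must carefully separate labelled and unlabelled counts and verify the arithmetic $n!\cdot 2^{O(\log^3 n)} = o(2^{\binom{n}{2}})$. No deeper combinatorial input is needed beyond the well-known concentration of $\alpha$ on $G(n,1/2)$ and the trivial $i_k \leq \binom{n}{k}$ bound.
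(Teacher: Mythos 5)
Your argument is correct and is essentially the paper's proof: restrict to the almost-sure event $\alpha(G)=O(\log n)$, note that the independence polynomial then has only $O(\log n)$ coefficients, each bounded by $\binom{n}{k}$, so at most $2^{O(\log^3 n)}$ polynomials occur on this class, which is negligible against the number of isomorphism classes $\approx 2^{\binom{n}{2}}/n!$. The only (immaterial) difference is that the paper treats the clique polynomial symmetrically via the Bollob\'as--Erd\H{o}s concentration of $\omega(G)$ rather than via your complementation identity $Cl(G;X)=Ind(\overline{G};X)$.
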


The proof uses estimates for the independence number \begin{math}\alpha(G)\end{math} and the clique number \begin{math}\omega(G)\end{math}
for random graphs, (see \cite{bollobas1976cliques} and \cite{frieze1990independence}) together with a counting argument.\\

This theorem can be generalized:
\begin{definition}
\label{definition ind function}
 Let \begin{math}\mathcal{C}\end{math} be a graph property.
We say that a function \begin{math}f:\mathbb{N} \rightarrow \mathbb{N}\end{math} is an 
{\em independence (clique) function for \begin{math}\mathcal{C}\end{math}}
if for every graph \begin{math}G\in \mathcal{C}\end{math}, the graph \begin{math}G\end{math}
 has an independent set (clique) of size \begin{math}f(|V(G)|)\end{math}.
\end{definition}
 
Denote by \begin{math}\hat{\mathcal{C}}\end{math} the class of complement graphs \begin{math}\bar{G}\end{math} of graphs
\begin{math}G \in \mathcal{C}\end{math}, and \begin{math}P_{\mathcal{C}}(G;X) = \sum_{A \subset V(G): G[A] \in \mathcal{C}} X^{|A|}\end{math}, and
\begin{math}P_{\hat{\mathcal{C}}}(G;X) = \sum_{A \subset V(G): G[A] \in \hat{\mathcal{C}}} X^{|A|}\end{math}.

\begin{theorem}
\label{theorem classes}
Let \begin{math}Q\end{math}  be a graph property that has an independence or a clique function 
\begin{math}f\end{math}  that satisfies that for all \begin{math}n\in \NN\end{math} ,
\begin{math}f(n)\geq n/a\end{math}  for some fixed \begin{math}a\in \NN\end{math} . Then \begin{math}P_Q\end{math}  is weakly distinguishing.
\end{theorem}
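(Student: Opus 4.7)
The plan is to generalize the strategy sketched for Theorem~\ref{theorem clique ind}: combine a classical high-probability bound on $\alpha(G)$ for random graphs with a pigeonhole argument on the space of possible polynomials. First I would reduce the clique case to the independence case. Since complementation commutes with taking induced subgraphs, if $f$ is a clique function for $Q$ then $f$ is an independence function for $\hat{Q}$, and one checks immediately that $P_Q(G;X) = P_{\hat Q}(\bar G;X)$. Because $G\mapsto\bar G$ is a bijection on labeled graphs that preserves the uniform distribution and sends isomorphism classes to isomorphism classes, $P_Q$ is weakly distinguishing if and only if $P_{\hat Q}$ is, so I may assume $f$ is an independence function.

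Next, the key structural observation: if $G[A]\in Q$, then $G[A]$ contains an independent set of size $f(|A|)\geq |A|/a$, which is also independent in $G$; hence every nonzero monomial $X^k$ of $P_Q(G;X)$ satisfies $k\leq a\cdot\alpha(G)$. Invoking the classical estimate $\alpha(G)\leq (2+o(1))\log_2 n$ for almost all graphs on $n$ vertices \cite{bollobas1976cliques, frieze1990independence}, I conclude that for almost all $G$ the polynomial $P_Q(G;X)$ has degree at most $D_n = (2a+o(1))\log_2 n = O(\log n)$, with coefficients trivially bounded by $\binom{n}{D_n} \leq n^{D_n}$.

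The proof would then finish by counting. The number of polynomials with non-negative integer coefficients of degree at most $D_n$ whose coefficients are bounded by $n^{D_n}$ is at most $2^{O(\log^3 n)}$. If $G$ is $P_Q$-unique, then every labeled graph sharing its polynomial is a relabeling of $G$, so at most $n! = 2^{O(n\log n)}$ labeled graphs realize that polynomial. Hence the number of $P_Q$-unique graphs in the typical set is at most $2^{O(n\log n)}$, which is negligible compared to the $2^{\binom{n}{2}}$ labeled graphs on $n$ vertices, so almost all graphs admit a $P_Q$-mate. The main obstacle is really only the second step: once the polynomial is forced into a low-dimensional set of candidates, the pigeonhole finish is immediate, and all the content lies in transferring the classical concentration of $\alpha(G(n,1/2))$ to a polynomial-degree bound via the hypothesis $f(n)\geq n/a$.
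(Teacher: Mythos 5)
Your proposal is correct and follows essentially the same route as the paper: restrict to the almost-sure event that $\alpha(G)=O(\log n)$, use the hypothesis $f(n)\geq n/a$ to force $\deg P_Q(G;X)\leq a\cdot\alpha(G)=O(\log n)$ with coefficients at most $\binom{n}{k}$, and then count the $2^{O(\log^3 n)}$ candidate polynomials against the number of graphs. Your explicit complementation reduction for the clique case and the labeled-graph version of the final count are only cosmetic variations on the paper's argument (which invokes its Lemmas \ref{lemma 1} and \ref{lemma 2} for the same purpose).
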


This applies to the following cases:

\begin{itemize}
\item A graph \begin{math}G\end{math} is \begin{math}k\end{math}-degenerate if every induced subgraph of \begin{math}G\end{math} has a vertex of degree at most \begin{math}k\end{math}.
It is easy to see that
every \begin{math}k\end{math}-degenerate graph \begin{math}G\end{math} of order \begin{math}n\end{math} has an independent set of size \begin{math}{\left\lceil \frac{n}{k+1} \right\rceil}\end{math}.
\item Among the \begin{math}k\end{math}-degenerate graphs we find the graphs of treewidth at most \begin{math}k\end{math}, graphs of degree at most \begin{math}k\end{math}, 
and planar graphs.
\item A \begin{math}k\end{math}-colourable graphs \begin{math}G\end{math} has an independent set of size at least \begin{math}{\left\lceil \frac{n}{k} \right\rceil}\end{math}.
\item Let \begin{math}\mathcal{C}\end{math} be a graph property. A function \begin{math}\gamma: V(G) \rightarrow [k]\end{math} is a {\em \begin{math}\mathcal{C}\end{math}-colouring} if every
color class induces a graph in \begin{math}\mathcal{C}\end{math}.
Such coloring were studied in \cite{Gutman83Generalizations}.
If we assume that \begin{math}\mathcal{C}\end{math} has an independence (clique) function \begin{math}g(n)\end{math}, then
the graphs which are \begin{math}\mathcal{C}\end{math}-colorable with at most \begin{math}k\end{math} colors have an independence (clique) function
\begin{math}f(n) = {\left\lceil \frac{g(n)}{k} \right\rceil}\end{math}.
\end{itemize}

Therefore, for \begin{math}\mathcal{C}\end{math} one of the properties above, the graph polynomials 
\begin{math}P_{\mathcal{C}}(G;X)\end{math}
are all weakly distinguishing.

A harmonious colouring of \begin{math}G\end{math} with at most \begin{math}k\end{math} colors is a proper colouring of \begin{math}G\end{math} such that every pair of colors
occurs at most once along an edge.
Let \begin{math}\chi_{harm}(G;k)\end{math} count the number of harmonious colourings of \begin{math}G\end{math}.
It was observed in \cite{makowsky2006polynomial} that \begin{math}\chi_{harm}(G;k)\end{math} is a polynomial in \begin{math}k\end{math}.

\begin{theorem}
\label{theorem harmonious}
Almost all graphs \begin{math}G\end{math} have a \begin{math}\chi_{harm}\end{math}-mate.
\end{theorem}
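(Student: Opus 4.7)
The plan is to find a graph property $\mathcal{P}$ that is satisfied by almost all graphs and that collapses $\chi_{harm}(G;k)$ to a single polynomial, which can then be shared by many non-isomorphic graphs. My candidate is the property $\mathcal{P}$ = \emph{every pair of non-adjacent vertices of $G$ has a common neighbour}.

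The first step is to show that $G\in\mathcal{P}$ forces every harmonious colouring of $G$ to be injective, so that $\chi_{harm}(G;k) = k(k-1)\cdots(k-n+1)$ whenever $G\in\mathcal{P}$ has $n$ vertices. Suppose a harmonious colouring $c$ satisfies $c(u)=c(v)$ with $u\neq v$. Since $c$ is proper, $u$ and $v$ are non-adjacent, and by $\mathcal{P}$ they share a neighbour $w$. But then the colour pair $\{c(u),c(w)\}$ appears on both edges $uw$ and $vw$, contradicting the harmonious condition. Because the polynomial $k(k-1)\cdots(k-n+1)$ has degree $n$, it also determines the number of vertices, so any $\chi_{harm}$-mate of such a $G$ must have $n$ vertices as well.

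Next I would use a first-moment estimate on the random graph $G=G(n,1/2)$: the expected number of non-adjacent pairs without a common neighbour is at most $\binom{n}{2}\cdot\tfrac12\cdot(3/4)^{n-2}$, which tends to $0$, so a.a.s.\ $G\in\mathcal{P}$. To produce a mate it then suffices to exhibit two non-isomorphic graphs on $n$ vertices in $\mathcal{P}$; for instance $K_n$ (vacuously in $\mathcal{P}$) and $K_n$ minus an edge (the unique non-adjacent pair has all $n-2$ other vertices as common neighbours). For $n\geq 3$ these are non-isomorphic, so for every $G\in\mathcal{P}$ at least one of $K_n$ and $K_n\setminus e$ is a $\chi_{harm}$-mate of $G$, completing the argument.

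The main obstacle is the first step: the harmonious condition has to be excluded not only for merging a single pair of vertices but for any way of merging two or more vertices into a single colour class. The virtue of $\mathcal{P}$ is that any candidate merged class, being an independent set in a proper colouring, contains a non-adjacent pair whose common neighbour immediately produces a repeated colour pair; in this way $\mathcal{P}$ simultaneously rules out every non-injective harmonious colouring, regardless of partition shape.
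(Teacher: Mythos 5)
Your proposal is correct and follows essentially the same route as the paper: both hinge on the observation that if every (non-adjacent) pair of vertices has a common neighbour --- a property of almost all graphs --- then every harmonious colouring is injective and $\chi_{harm}(G;k)$ collapses to $k(k-1)\cdots(k-n+1)$, a polynomial depending only on $n$. The only differences are minor: the paper certifies the ``almost all'' step by citing Fagin's $0$--$1$ law for one of Gaifman's extension axioms where you give a direct first-moment bound, and you exhibit explicit mates ($K_n$ and $K_n$ minus an edge) where the paper merely notes that all sufficiently large graphs with the property share the same polynomial.
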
 
The status of \begin{math}P\end{math}-uniqueness remains open for \begin{math}T(G;X,Y), \chi(G;X), m(G;X)\end{math} and \begin{math}char(G;X)\end{math}.

\section{Preliminaries}
 
Let \begin{math}G=(V,E)\end{math} be a graph. 
Denote by \begin{math}\mathcal{G}(n)\end{math} the set of all non-isomorphic graphs with \begin{math}n\end{math} vertices, 
and by \begin{math}\mathcal{G}\end{math} the set of all non-isomorphic graphs.
 
\begin{fact}[\cite{harary2014graphical}]
\begin{math}|\mathcal{G}(n)|\approx \frac{2^{{n \choose 2}}}{n!} \end{math} for a sufficiently large \begin{math}n\end{math}.
\end{fact}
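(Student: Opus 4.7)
The plan is to exploit the orbit--stabilizer formula applied to the action of the symmetric group $S_n$ on labeled graphs. Fix the vertex set $[n]=\{1,\ldots,n\}$; there are exactly $2^{\binom{n}{2}}$ labeled graphs on $[n]$. The group $S_n$ acts on this set by relabelling vertices, its orbits are precisely the isomorphism classes, and the stabilizer of a graph $G$ is its automorphism group $\mathrm{Aut}(G)$. Consequently the orbit of $G$ has size $n!/|\mathrm{Aut}(G)|$, and summing over a system of representatives for $\mathcal{G}(n)$ gives the identity
\[
\sum_{[G]\in\mathcal{G}(n)} \frac{n!}{|\mathrm{Aut}(G)|} \;=\; 2^{\binom{n}{2}}.
\]

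Since every term in this sum is bounded above by $n!$, dropping the denominators immediately yields the lower bound $|\mathcal{G}(n)| \geq 2^{\binom{n}{2}}/n!$. For the matching upper bound, I would split $\mathcal{G}(n)=\mathcal{G}_1(n)\sqcup \mathcal{G}_{>1}(n)$ according to whether a chosen representative is rigid ($|\mathrm{Aut}(G)|=1$) or not. Rigid classes contribute exactly $n!\cdot|\mathcal{G}_1(n)|$ labeled graphs. If one can show that the fraction of labeled graphs on $[n]$ with non-trivial automorphism group tends to $0$, then $n!\cdot|\mathcal{G}_1(n)| = (1-o(1))\,2^{\binom{n}{2}}$, so that $|\mathcal{G}_1(n)| = (1-o(1))\,2^{\binom{n}{2}}/n!$, and hence also $|\mathcal{G}(n)| = (1+o(1))\,2^{\binom{n}{2}}/n!$, because $|\mathcal{G}_1(n)|\leq |\mathcal{G}(n)|\leq 2^{\binom{n}{2}}/n! + |\mathcal{G}_{>1}(n)|$ and the latter error term is negligible.

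The main obstacle is therefore the classical rigidity theorem of Erd\H{o}s and R\'enyi: almost all labeled graphs have trivial automorphism group. The standard first-moment proof estimates, for each non-identity permutation $\pi\in S_n$, the number $|\mathrm{Fix}(\pi)|$ of labeled graphs on $[n]$ preserved by $\pi$, which equals $2^{\,o(\pi)}$ where $o(\pi)$ is the number of orbits of $\pi$ on unordered pairs; a direct computation shows $\binom{n}{2}-o(\pi)$ grows linearly in the number of moved points of $\pi$, so $|\mathrm{Fix}(\pi)|/2^{\binom{n}{2}}$ decays geometrically, and summing over conjugacy classes (of which there are only $p(n)$, subexponentially many) gives a total of $o(1)$. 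I would only sketch this cycle-type computation and defer to standard references on random graph enumeration for the full details.
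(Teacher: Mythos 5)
The paper does not actually prove this Fact; it is quoted from Harary and Palmer's book on graphical enumeration, so your proposal has to be judged against the standard proof rather than against anything in the text. Your overall strategy --- orbit--stabilizer plus the Erd\H{o}s--R\'enyi theorem that almost all graphs are asymmetric --- is indeed the classical route, and your lower bound $|\mathcal{G}(n)|\ge 2^{\binom{n}{2}}/n!$ is correct exactly as written.

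The gap is in the upper bound, at the step ``the latter error term is negligible.'' From the rigidity theorem you only obtain that the number $B_n$ of \emph{labeled} graphs with a nontrivial automorphism is $o(2^{\binom{n}{2}})$, and the resulting bound $|\mathcal{G}_{>1}(n)|\le B_n$ is far too weak: you need $|\mathcal{G}_{>1}(n)| = o(2^{\binom{n}{2}}/n!)$, which is smaller by a factor of $n!$. The point is that a non-rigid isomorphism class may correspond to very few labeled graphs (as few as one, e.g.\ $K_n$), so controlling the number of non-rigid labeled graphs does not control the number of non-rigid classes; in fact $B_n$ is of order $\binom{n}{2}2^{\binom{n}{2}-(n-2)}$, which dwarfs $2^{\binom{n}{2}}/n!$. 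The repair uses exactly the cycle-type computation you sketch, but routed through Burnside's lemma:
\begin{displaymath}
|\mathcal{G}(n)| \;=\; \frac{1}{n!}\sum_{\pi\in S_n}|\mathrm{Fix}(\pi)| \;=\; \frac{1}{n!}\Bigl(2^{\binom{n}{2}} + \sum_{\pi\ne e} 2^{o(\pi)}\Bigr),
\end{displaymath}
and your estimate $\sum_{\pi\ne e}2^{o(\pi)} = o(2^{\binom{n}{2}})$ (dominated by the $\binom{n}{2}$ transpositions, each fixing $2^{\binom{n}{2}-(n-2)}$ labeled graphs) finishes the proof in one line. Equivalently, within your own decomposition, note that each non-rigid class contributes $n!\bigl(1 - 1/|\mathrm{Aut}(G)|\bigr)\ge n!/2$ to $\sum_{\pi\ne e}|\mathrm{Fix}(\pi)|$, whence $|\mathcal{G}_{>1}(n)|\le \frac{2}{n!}\sum_{\pi\ne e}2^{o(\pi)} = o\bigl(2^{\binom{n}{2}}/n!\bigr)$.
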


Let \begin{math}P:\mathcal{G} \goesto \ZZ[x]\end{math} be a graph polynomial. 
For a graph \begin{math}G\end{math}, we say  two non-isomorphic graphs \begin{math}G\end{math} and \begin{math}H\end{math} are called \begin{math}P\end{math}-mates if \begin{math}P(G)=P(H)\end{math}. \\
Denote by \begin{math}U_P(n)\end{math} the set of \begin{math}P\end{math} unique graphs with \begin{math}n\end{math} vertices,
 by \begin{math}\beta_P(n)\end{math} the number of polynomials in \begin{math}\ZZ[x]\end{math} 
 such that there  is a graph of order less or equal to-\begin{math}n\end{math} that maps to that polynomial, 
 and by \begin{math}\beta_{P,\mathcal{C}}(n)\end{math} the number of polynomials in \begin{math}\ZZ[x]\end{math} 
 such that there  is a graph of order less or equal to-\begin{math}n\end{math} in \begin{math}\mathcal{C}\end{math} that maps to that polynomial.\\
We denote by \begin{math}K_n\end{math} the clique of size \begin{math}n\end{math}, 
and by \begin{math}I_n\end{math} the edgeless graph of size \begin{math}n\end{math}.\\
Let \begin{math}G\end{math} be a graph, and \begin{math}A\subseteq V(G)\end{math}. The induced subgraph of \begin{math}A\end{math} in \begin{math}G\end{math}, 
denoted \begin{math}G[A]\end{math}, is the graph with vertex set \begin{math}A\end{math}, and for \begin{math}v,u \in A\end{math}, \begin{math}(u,v)\in E(G[A])\end{math} iff \begin{math}(u,v) \in E(G)\end{math}.

\begin{definition}
For a graph polynomial \begin{math}P\end{math}, we say \begin{math}P\end{math} is weakly distinguishing if 
\begin{math}lim_{n \goesto \infty} \frac{|U_P(n)|}{|\mathcal{G}(n)|}=0\end{math}. 
For a family of graphs \begin{math}\mathcal{C}\end{math} we say that \begin{math}P\end{math} is weakly distinguishing on 
\begin{math}\mathcal{C}\end{math} if \begin{math}lim_{n \goesto \infty} \frac{|U_P(n)\cap \mathcal{C}|}{|\mathcal{G}(n) \cap \mathcal{C}|}=0\end{math}
\end{definition}
We wish to consider a particular type of graph polynomials:
\begin{definition}
Let \begin{math}Q\end{math} be a graph property. For all graphs \begin{math}G\end{math}, define \begin{math}P_Q(G;x)=\sum_{A \subset V(G): G[A] \in Q} X^{|A|}\end{math}.
\end{definition}

\section{The Degree Polynomial}
\begin{definition}
For a graph \begin{math}G=(V,E)\end{math} of order \begin{math}n\end{math} and \begin{math}v \in V\end{math}, denote by \begin{math}deg(v)\end{math} the degree of \begin{math}v\end{math}. 
Define the Degree polynomial of \begin{math}G\end{math} to be \begin{math}Deg(G,x)=\sum_{v\in V} x^{deg(v)}\end{math}.
\end{definition}
Note that the degree of a vertex is bounded above by \begin{math}n-1\end{math}, 
so the degree of the polynomial \begin{math}Deg(G,x)\end{math} is at most \begin{math}n-1\end{math}. For every \begin{math}0\leq i\leq n-1\end{math}, 
the coefficient of \begin{math}x^i\end{math} in \begin{math}Deg(G,x)\end{math} is an integer number between \begin{math}0\end{math} and \begin{math}n\end{math}. Thus, we get 
\begin{displaymath}\beta_{Deg}(n)\leq (n+1)^{n-1}\leq (n+1)^n\end{displaymath}

Now we are ready to prove theorem \ref{theorem deg}:\\
{\bf Theorem \ref{theorem deg}}: Almost all graphs \begin{math}G\end{math} have a \begin{math}Dg\end{math}-mate.\\
\begin{proof}
Let \begin{math}G=(V,E)\end{math} be a graph with \begin{math}|V(G)|=n\end{math}. 
 We now evaluate:
\begin{math}\\ \\
\begin{aligned}
\lim_{n\goesto \infty} \dfrac{U_{Deg}(n)}{|\mathcal{G}(n)|}\leq \lim_{n\goesto \infty} 
\dfrac{\beta_{Deg}(n)}{|\mathcal{G}(n)|}\leq \lim_{n\goesto \infty} \dfrac{n^n}{|\mathcal{G}(n)|} =
 \lim_{n\goesto \infty} \dfrac{(n+1)^n n!}{2^{n(n-1)/2}} 
\\ \leq \lim_{n\goesto \infty} \dfrac{(n+1)^n \cdot (n+1)^n}{2^{n(n-1)/2}}= \lim_{n\goesto \infty} \dfrac{(n+1)^{2n}}{2^{n(n-1)/2}}=0
\end{aligned}\\ \\
\end{math}
\end{proof}

\section{A General Method for Proving Graph Polynomials are Weakly Distinguishing}
We wish to apply the same idea used in proving the degree polynomial is weakly distinguishing
 to a large class of  graph polynomials. We start with some lemmas.
First, we show that if a graph polynomial \begin{math}P\end{math} is weakly distinguishing on a 
large subset of \begin{math}\mathcal{G}\end{math}, it is weakly distinguishing:
\begin{lemma}
\label{lemma 1}
Let \begin{math}P\end{math} be a graph polynomial and \begin{math}\mathcal{C}\end{math} a family of graphs such that \begin{math}\lim_{n\goesto \infty} |\mathcal{C}(n)|/|\mathcal{G}(n)| =1\end{math}. 
If \begin{math}\lim_{n\goesto \infty} |U_P(n) \cap \mathcal{C}|/|\mathcal{G}(n)|=0\end{math} then \begin{math}P\end{math} is weakly distinguishing.
\end{lemma}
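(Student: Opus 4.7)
The plan is to split the set $U_P(n)$ of $P$-unique graphs of order $n$ into two pieces according to membership in $\mathcal{C}$, and show that each piece contributes a vanishing fraction of $|\mathcal{G}(n)|$. Recall that $P$ is weakly distinguishing precisely when $|U_P(n)|/|\mathcal{G}(n)| \to 0$, so this is what we need to establish.

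First I would write the trivial decomposition
\begin{equation*}
U_P(n) = \bigl(U_P(n) \cap \mathcal{C}\bigr) \,\cup\, \bigl(U_P(n) \setminus \mathcal{C}\bigr),
\end{equation*}
which gives the bound
\begin{equation*}
\frac{|U_P(n)|}{|\mathcal{G}(n)|} \;\leq\; \frac{|U_P(n)\cap\mathcal{C}|}{|\mathcal{G}(n)|} \;+\; \frac{|U_P(n)\setminus\mathcal{C}|}{|\mathcal{G}(n)|}.
\end{equation*}
The first summand tends to $0$ by the hypothesis on $U_P(n)\cap \mathcal{C}$. For the second, I would use the crude bound $U_P(n)\setminus\mathcal{C} \subseteq \mathcal{G}(n)\setminus\mathcal{C}$, which gives
\begin{equation*}
\frac{|U_P(n)\setminus\mathcal{C}|}{|\mathcal{G}(n)|} \;\leq\; 1 - \frac{|\mathcal{C}(n)|}{|\mathcal{G}(n)|},
\end{equation*}
and this tends to $0$ by the assumption that $\mathcal{C}$ has density $1$ in $\mathcal{G}$. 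Combining the two estimates and taking limits finishes the proof.

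There is no real obstacle here; the lemma is essentially a bookkeeping statement saying that a negligibly thin exceptional set cannot prevent weak distinguishability. The only point to be careful about is that the hypothesis is phrased with denominator $|\mathcal{G}(n)|$ rather than $|\mathcal{C}(n)|$, which is exactly what makes the decomposition above go through directly without having to invoke the density of $\mathcal{C}$ a second time inside the first summand.
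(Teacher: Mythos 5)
Your proposal is correct and follows essentially the same route as the paper: the same decomposition of $U_P(n)$ into its intersection with $\mathcal{C}$ and its complement, with the first term vanishing by hypothesis and the second bounded by the density-zero set $\mathcal{G}(n)\setminus\mathcal{C}$. Nothing is missing.
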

\begin{proof}

\begin{displaymath}
\dfrac{|U_P(n)|}{|\mathcal{G}(n)|}=
\dfrac{|U_P(n)\cap \mathcal{C}|+|U_P(n)\cap (\mathcal{G}(n)-\mathcal{C})|}{|\mathcal{G}(n)|}=
\dfrac{|U_P(n)\cap \mathcal{C}|}{|\mathcal{G}(n)|}+\dfrac{|U_P(n)\cap (\mathcal{G}(n)-\mathcal{C})|}{|\mathcal{G}(n)|}
\end{displaymath}

When taking the limit, note that the left term in the sum converges to 0 by assumption, so it remains to evaluate:

\begin{displaymath}
\lim_{n \goesto \infty} \dfrac{|U_P(n)\cap (\mathcal{G}-\mathcal{C})|}{|\mathcal{G}(n)|} \leq
 \lim_{n \goesto \infty} \dfrac{|\mathcal{G}(n)-\mathcal{C}|}{|\mathcal{G}(n)|}=0
\end{displaymath}
\end{proof}

\begin{lemma}
\label{lemma 2}
Let \begin{math}f:\NN\goesto \RR\end{math}. If \begin{math}f(n)\leq (\log n)^{O(1)}\end{math} , then asymptotically 

\begin{displaymath}{n \choose f(n)}^{f(n)} \leq (\dfrac{n}{f(n)})^{f(n)+nf(n)}\dfrac{1}{(2 \pi)^{f(n)/2}\cdot n^{f(n)/2}}\end{displaymath}

\end{lemma}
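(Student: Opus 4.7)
The plan is to bound $\binom{n}{f(n)}$ via Stirling's approximation, raise the result to the $f(n)$-th power, and then verify the claimed inequality by an asymptotic comparison of exponents. Throughout I write $k = f(n)$ for brevity.

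First I would combine the elementary inequality $\binom{n}{k} \leq n^k / k!$ with Stirling's lower bound $k! \geq \sqrt{2\pi k}\,(k/e)^k$ to obtain
\[
\binom{n}{k} \leq \frac{1}{\sqrt{2\pi k}}\left(\frac{en}{k}\right)^k.
\]
Raising both sides to the $k$-th power and separating constants yields
\[
\binom{n}{k}^k \leq \frac{e^{k^2}}{(2\pi k)^{k/2}}\left(\frac{n}{k}\right)^{k^2}.
\]
To conclude, I would take logarithms of both sides of the claimed inequality, cancel the common $(2\pi)^{k/2}$ contribution and the factor $(n/k)^{k^2}$ appearing on each side, and reduce the verification to the residual inequality
\[
k^2 \leq \left(\tfrac{k}{2} + nk - k^2\right)\log(n/k),
\]
which under the hypothesis $k \leq (\log n)^{O(1)}$ is immediate: the right-hand side grows like $nk\log n$, whereas the left-hand side is only $(\log n)^{O(1)}$.

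The only obstacle is careful bookkeeping, choosing a form of Stirling that lets the three constituent factors $e^{k^2}$, $(2\pi k)^{-k/2}$, and $(n/k)^{k^2}$ separate cleanly, so that the considerable slack on the right of the claimed inequality --- essentially the factor $(n/k)^{nk - k^2}$, which is superpolynomial in $n$ while $e^{k^2}$ is merely quasi-polynomial --- absorbs all the correction terms. No analytic depth is required; a single asymptotic comparison of exponents closes the argument.
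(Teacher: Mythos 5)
Your argument is correct, and it reaches the stated bound by a genuinely different route from the paper. The paper applies Stirling to all three factorials in $\binom{n}{f(n)}=\frac{n!}{f(n)!(n-f(n))!}$ and then uses the single inequality $f(n)\le n-f(n)$ to replace $(n-f(n))^{n-f(n)}$ by $f(n)^{n-f(n)}$ and $\sqrt{n-f(n)}$ by $\sqrt{f(n)}$; after this step the right-hand side of the lemma is obtained as an exact algebraic rewriting, which explains where the specific expression $(n/f(n))^{f(n)+nf(n)}(2\pi)^{-f(n)/2}n^{-f(n)/2}$ comes from. You instead use the cruder but exact bound $\binom{n}{k}\le n^k/k!\le (en/k)^k/\sqrt{2\pi k}$ and then close the gap by a logarithmic comparison, reducing everything to $k^2\le(\tfrac{k}{2}+nk-k^2)\log(n/k)$, which indeed holds for large $n$ when $k\le(\log n)^{O(1)}$ since the right side is of order $nk\log n$. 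I checked your reduction and it is exactly right. What your version buys is rigor: the paper's chain passes through a Stirling ``$\approx$'' that is not a genuine inequality, whereas every step of yours is an honest bound (Stirling's lower bound for $k!$ is valid for all $k\ge 1$), at the cost of an extra asymptotic comparison at the end and of not ``deriving'' the particular shape of the right-hand side. The only point worth adding in a write-up is the implicit assumption $f(n)\ge 1$ (so that $n/f(n)$ and $\sqrt{2\pi k}$ make sense), which the paper also leaves tacit.
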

\begin{proof}
By applying the Stirling approximation \begin{math}k!=\sqrt{2 \pi k }(\frac{k}{e})^k\end{math}  we evaluate:

\begin{math}\\
\begin{aligned}
{n \choose f(n)}^{f(n)}=(\dfrac{n!}{f(n)!(n-f(n))!})^{f(n)} \\ \\
 \approx (\dfrac{\sqrt{2 \pi n}(n/e)^n}{\sqrt{2 \pi f(n)}(f(n)/e)^{f(n)}  
 \sqrt{2 \pi (n-f(n))}((n-f(n))/e)^{(n-f(n))}})^{f(n)} \\ \\
=(\dfrac{n^{n}}{(f(n))^{f(n)}(n-f(n)^{n-f(n)}} \cdot \dfrac{\sqrt{n}}{\sqrt{2 \pi f(n) (n-f(n)}})^{f(n)}
\\ \\ \leq (\dfrac{n}{2 \pi f(n)\cdot f(n)})^{f(n)/2}\cdot \dfrac{n^{nf(n)}}{f(n)^{nf(n)}}
\end{aligned}\\\\
\end{math} 

 where the inequality is due to \begin{math}f(n) \leq n-f(n)\end{math}  for a sufficiently large \begin{math}n\end{math} .
 
\begin{math}\\\\
\begin{aligned}
(\dfrac{n}{2 \pi f(n)\cdot f(n)})^{f(n)/2}\cdot \dfrac{n^{nf(n)}}{f(n)^{nf(n)}}=
(\dfrac{n}{f(n)})^{f(n)+nf(n)}\dfrac{1}{(2 \pi)^{f(n)/2}\cdot n^{f(n)/2}}
\end{aligned}\\\\
\end{math} 
\end{proof}


Our main tool for proving graph polynomials are weakly distinguishing is theorem \ref{theorem classes}, which provides a sufficient condition for a polynomial \begin{math}P_Q\end{math} to be weakly distinguishing. This condition is given in terms of independence and clique functions (see definition \ref{definition ind function}). We will prove  theorem \ref{theorem classes} using the following theorems:

\begin{theorem}[Frieze \cite{frieze1990independence}]
\label{theorem frieze independent}
For a graph \begin{math}G\end{math} , denote by \begin{math}\alpha(G)\end{math}  the size of the largest independent set of vertices in \begin{math}G\end{math} . 
Then for almost all graphs of order \begin{math}n\end{math} , \begin{math}\alpha(G) \approx 4 \log \frac{n}{2}\end{math} 
\end{theorem}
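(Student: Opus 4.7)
The plan is to reduce the statement to a probability calculation on the Erd\H{o}s--R\'enyi random graph $G(n,1/2)$. Since we consider labelled graphs and the uniform measure on $\mathcal{G}(n)$ corresponds exactly to including each possible edge independently with probability $1/2$, the assertion ``for almost all graphs of order $n$'' translates to a high-probability statement: $\alpha(G)/\log n$ concentrates around the constant appearing in the theorem as $n\to\infty$. Throughout, let $X_k$ be the random variable counting the number of independent sets of size $k$ in $G(n,1/2)$; then
\[
\mathbb{E}[X_k] = \binom{n}{k}\, 2^{-\binom{k}{2}}.
\]

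For the upper bound, I would use the first moment method. Substituting $k$ slightly above the claimed threshold, a Stirling estimate shows $\mathbb{E}[X_k]\to 0$, and Markov's inequality then gives $\Pr[X_k\geq 1]\to 0$, so almost no graph has an independent set of that size. This step is essentially a computation with $\binom{n}{k}$ and $2^{-\binom{k}{2}}$ and should be routine.

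For the matching lower bound, I would invoke the second moment method. Writing $X_k = \sum_{S} \mathbf{1}[S\text{ is independent}]$ over $k$-subsets $S\subseteq V(G)$, the second moment decomposes as
\[
\mathbb{E}[X_k^2] = \sum_{i=0}^{k} \binom{n}{k}\binom{k}{i}\binom{n-k}{k-i}\, 2^{-2\binom{k}{2} + \binom{i}{2}},
\]
stratified by the intersection size $i$ of two $k$-subsets. For $k$ just below the threshold, I would show $\mathbb{E}[X_k^2] \leq (1+o(1))\,\mathbb{E}[X_k]^2$, from which Chebyshev's inequality yields $\Pr[X_k = 0]\to 0$ and hence a lower bound of $k$ on $\alpha(G)$ almost surely.

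The main obstacle is the variance bookkeeping in the second moment: one must bound the sum uniformly in the intersection size $i$, showing that the dominant contribution comes from $i=O(1)$ while the ``moderate intersection'' terms $i\approx k/2$ do not blow up. Once the two bounds match asymptotically, combining them (using monotonicity of ``containing an independent set of size $k$'' in $k$) pins $\alpha(G)$ to the claimed value up to lower-order terms, completing the proof.
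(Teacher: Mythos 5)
The paper does not actually prove this statement: it is quoted as a known result and attributed to Frieze \cite{frieze1990independence}, so there is no internal proof to compare against. Your first-moment/second-moment argument on $G(n,1/2)$ is the standard route to the concentration of the independence number of dense random graphs (due to Matula, Grimmett--McDiarmid and Bollob\'as--Erd\H{o}s rather than to Frieze, whose 1990 paper treats sparse $G(n,d/n)$), and the outline is sound: the upper bound is indeed a routine Stirling computation, and your decomposition of $\mathbb{E}[X_k^2]$ by intersection size is the correct one. Two caveats. First, in the variance bookkeeping the terms you must control are not only $i\approx k/2$ but also $i$ close to $k$: the diagonal $i=k$ contributes $\mathbb{E}[X_k]$ to $\mathbb{E}[X_k^2]$, so the ratio $\mathbb{E}[X_k^2]/\mathbb{E}[X_k]^2$ is $1+o(1)$ only when $\mathbb{E}[X_k]\to\infty$, which restricts the plain second-moment argument to $k\le(2-\varepsilon)\log_2 n$; that suffices for an asymptotic statement, though not for two-point concentration. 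Second, and more importantly, carrying out your computation pins the threshold at $k\sim 2\log_2 n=\frac{2}{\log 2}\log n$ (consistent, via complementation, with the clique-number theorem quoted just after it in the paper), not at $4\log\frac{n}{2}$; the constant in the statement as printed does not emerge from the first-moment calculation under any reading of the base of the logarithm, so what your proof establishes is a corrected version of the statement. For the use the paper makes of the theorem only $\alpha(G)=O(\log n)$ for almost all graphs matters, and your argument delivers that. A final minor point: the paper's ``almost all'' is with respect to the non-isomorphic graphs $\mathcal{G}(n)$, so the passage to $G(n,1/2)$ needs the standard remark that almost all graphs are rigid, making the labelled and unlabelled uniform measures asymptotically interchangeable for isomorphism-invariant properties.
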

\begin{theorem}[Erd\"os and Bollob\'as \cite{bollobas1976cliques}]
\label{theorem bollobas cliques}
For almost all graphs \begin{math}G\end{math}  of order \begin{math}n\end{math}, \begin{math}\omega(G) \approx \frac{2}{\log 2}\cdot \log n\end{math} 
\end{theorem}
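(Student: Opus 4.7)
The plan is to use the first and second moment methods on the number of $k$-cliques in the Erd\"os--R\'enyi random graph $G(n,1/2)$, which (after passing to isomorphism classes) induces essentially the uniform measure on $\mathcal{G}(n)$. Let $X_k$ denote the number of cliques of order $k$ in a random $G$. By linearity of expectation,
\[
\EE[X_k] \;=\; \binom{n}{k}\,2^{-\binom{k}{2}}.
\]
Setting the right-hand side equal to $1$ and solving asymptotically gives the critical value $k_0 = \tfrac{2}{\log 2}\log n$, exactly the quantity appearing in the statement.

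For the upper bound, I would fix $\varepsilon > 0$, set $k = \lceil (1+\varepsilon)k_0\rceil$, and apply the elementary bound $\binom{n}{k} \leq (en/k)^k$ to check that $\EE[X_k] \to 0$. Markov's inequality $\PP[X_k \geq 1] \leq \EE[X_k]$ then shows that almost no graph of order $n$ contains a clique of size $k$, so $\omega(G) \leq (1+\varepsilon)k_0$ almost always.

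For the matching lower bound I would apply the second moment method with $k = \lfloor (1-\varepsilon)k_0 \rfloor$. Expanding $X_k^2$ as a sum of products of indicators and grouping pairs of $k$-sets by their intersection size $j$ gives
\[
\EE[X_k^2] \;=\; \sum_{j=0}^{k} \binom{n}{k}\binom{k}{j}\binom{n-k}{k-j}\, 2^{-2\binom{k}{2}+\binom{j}{2}}.
\]
If one can establish $\EE[X_k^2] = (1+o(1))\EE[X_k]^2$, then the Paley--Zygmund inequality yields $\PP[X_k > 0] \to 1$ and hence $\omega(G) \geq k$ for almost all $G$, which together with the upper bound proves $\omega(G) = (1+o(1))k_0$ a.a.s.

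The main obstacle is precisely this variance estimate: one must show that the $j=0$ term (pairs of disjoint cliques) dominates the sum and that the remaining contributions from $j=1,\dots,k$ collect to $o(\EE[X_k]^2)$. I would split the range of $j$ into a small-$j$ regime, where each summand is a mild perturbation of the $j=0$ term, and a large-$j$ regime, where the explosive factor $2^{\binom{j}{2}}$ has to be shown to be outweighed by the shrinking of $\binom{n-k}{k-j}$. Controlling these ratios uniformly by means of Stirling's formula, together with the fact that $k = O(\log n)$, is the delicate computation --- essentially the same estimate that governs the analogous threshold for $\alpha(G)$ in Theorem~\ref{theorem frieze independent}.
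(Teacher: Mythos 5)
The paper does not prove this statement at all: it is quoted as a known result of Erd\H{o}s and Bollob\'as and used as a black box, so there is no ``paper proof'' to compare against. Your sketch is the standard first-moment/second-moment argument for the clique number of $G(n,1/2)$, and its skeleton is correct: the expectation $\EE[X_k]=\binom{n}{k}2^{-\binom{k}{2}}$ and the threshold $k_0=\tfrac{2}{\log 2}\log n$ are right, Markov gives the upper bound, the displayed formula for $\EE[X_k^2]$ (pairs of $k$-sets meeting in $j$ vertices sharing $\binom{j}{2}$ edges) is correct, and the reduction of the lower bound to $\EE[X_k^2]=(1+o(1))\EE[X_k]^2$ via Chebyshev/Paley--Zygmund is sound. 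Two small points worth making explicit: the $j=0$ and $j=1$ terms together are at most $\EE[X_k]^2$ since $\binom{j}{2}=0$ there, so the whole burden is on $2\le j\le k$; and combining the two bounds for every fixed $\varepsilon$ into the single statement $\omega(G)=(1+o(1))k_0$ requires a routine diagonalization over $\varepsilon\to 0$.

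The one genuine gap is the variance estimate itself, which you describe as a plan but do not carry out, and it is not a formality --- it is the entire content of the theorem. Concretely, one must bound
\[
\sum_{j=2}^{k}\frac{\binom{k}{j}\binom{n-k}{k-j}}{\binom{n}{k}}\,2^{\binom{j}{2}}=o(1),
\]
and the standard way to do this is to show that the ratio of consecutive terms is controlled so that the sum is dominated by its endpoints $j=2$ (where the term is roughly $k^4/n^2\cdot 2$) and $j=k$ (where it is $1/\EE[X_k]$, which tends to $0$ only because $k$ is bounded away from $k_0$). Until that computation is done, the lower bound is unproved. Since the paper itself simply cites \cite{bollobas1976cliques}, the honest comparison is: your route is the correct and essentially unique elementary one, but as written it is an outline of the known proof rather than a proof.
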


We are now ready to prove theorem \ref{theorem classes}:\\
{\bf Theorem \ref{theorem classes}:} Let \begin{math}Q\end{math}  be a graph property that has an independence or a clique function 
\begin{math}f\end{math}  that satisfies that for all \begin{math}n\in \NN\end{math} ,
\begin{math}f(n)\geq n/a\end{math}  for some fixed \begin{math}a\in \NN\end{math} . Then \begin{math}P_Q\end{math}  is weakly distinguishing.\\

\begin{proof}
Assume \begin{math}f\end{math}  is an independence function.
Set \begin{math}\epsilon=1/10\end{math}  and let \begin{math}\mathcal{C}=\{G:\alpha(G)\leq 4 \log \frac{n}{2}+\epsilon\}\end{math} .
By theorem \ref{theorem frieze independent}, almost all graphs are in \begin{math}\mathcal{C}\end{math}.
Note that if \begin{math}G\in\cC\end{math} , and \begin{math}H\end{math}  is an induced subgraph of \begin{math}G\end{math} with \begin{math}H\in Q\end{math},
then there is an independent set of size \begin{math}\frac{|V(H)|}{a}\end{math} in \begin{math}H\end{math}, and hence in \begin{math}G\end{math}, 
and so \begin{math}|V(H)|\leq 4 \log \frac{n}{2}+\epsilon\end{math}.\\
This implies that
\begin{math}P_Q(G,x)=\sum_{k=1}^{4 \log \frac{|V(G)|}{2}+\epsilon} b_kx^k\end{math}
with \begin{math}0\leq b_k \leq {n \choose k}\end{math} for all \begin{math}k\end{math}, and so
\begin{displaymath}\beta_{P_Q,\cC}(n)\leq {n \choose 4 \log \frac{|V(G)|}{2}+\epsilon}^{4 \log \frac{|V(G)|}{2}+\epsilon}\end{displaymath}
hence by lemmas \ref{lemma 1} and \ref{lemma 2}, \begin{math}P_Q\end{math} is weakly distinguishing.\\

If \begin{math}f\end{math} is a clique function, the proof is similar using theorem \ref{theorem bollobas cliques}.
\end{proof}

\section{Applications of the Method}
\subsection{The Clique and Independence Polynomials}

\begin{definition}
Let \begin{math}G\end{math} be a graph. For \begin{math}i\in \NN \end{math}, denote \begin{math}c_i(G)=|\{A\subseteq V(G):G[A]\cong K_i\}|\end{math}. 
The clique polynomial of \begin{math}G\end{math}, \begin{math}Cl(G,x)\end{math} is defined to be \begin{math}Cl(G,x)=1+\sum_{i=1}^\infty c_i(G)x^i\end{math}. 
Note that this is a graph polynomial, and that the sum in the definition is finite. 
The clique number of \begin{math}G\end{math}, denoted \begin{math}\omega(G)\end{math}, is the degree of the clique polynomial 
(i.e. this is the size of the largest clique subgraph of \begin{math}G\end{math}).
\end{definition}

\begin{definition}
Let \begin{math}G\end{math} be a graph. For \begin{math}i\in \NN\end{math}, denote \begin{math}s_i(G)=|\{A\subseteq V(G):G[A]\cong I_i\}|\end{math}. 
The independence polynomial of \begin{math}G\end{math}, \begin{math}Ind(G,x)\end{math} is defined to be \begin{math}Ind(G,x)=1+\sum_{i=1}^\infty s_i(G)x^i\end{math}. 
Note that this is a graph polynomial, and that the sum in the definition is finite. 
The independence number of \begin{math}G\end{math}, denoted \begin{math}\alpha(G)\end{math}, is the degree of the independence polynomial 
(i.e. this is the size of the largest independent set in \begin{math}G\end{math}).
\end{definition}
Theorem \ref{theorem clique ind} is now a direct corollary of theorem \ref{theorem classes}:\\
{\bf Theorem \ref{theorem clique ind}}: Almost all graphs \begin{math}G\end{math} have an \begin{math}Ind\end{math}-mate and a \begin{math}Cl\end{math}-mate.\\
\begin{proof}
For the independence polynomial, note that \begin{math}Ind(G,x)=P_Q(G;x)\end{math}, were \begin{math}Q\end{math} is the property consisting of edgeless graphs. Note that the identity function on \begin{math}\NN \end{math} is an independence function for \begin{math}Q\end{math}, and clearly it satisfies the condition in theorem \ref{theorem classes} for \begin{math}a=1\end{math}. Hence the independence polynomial is weakly distinguishing.\\
Similarly, for the clique polynomial note that \begin{math}Cl(G,x)=P_Q(G;x)\end{math}, were \begin{math}Q\end{math} is the property of complete graphs. Note that the identity function on \begin{math}\NN \end{math} is a clique function for \begin{math}Q\end{math}, and clearly it satisfies the condition in theorem \ref{theorem classes} for \begin{math}a=1\end{math}. Hence the clique polynomial is weakly distinguishing.
\end{proof}

\subsection{Generating Functions}
Theorem \ref{theorem classes} can be applied to many graph classes to produce weakly distinguishing graph polynomials.
Of particular interest are \begin{math}k\end{math}-degenerate classes and amongst them classes of bounded treewidth.

For a graph \begin{math}G\end{math}, and \begin{math}v\in V(G)\end{math} denote by 
\begin{math}N_G(v)\end{math} the closed neighbourhood of \begin{math}v\end{math} in \begin{math}G\end{math}, i.e. 
\begin{math}N_G(v)=\{v\}\cup \{u\in V(G): \{v,u\}\in E(G)\}\end{math}
\begin{definition}
For \begin{math}k\in \NN\end{math}, a graph \begin{math}G\end{math} is said to be \begin{math}k\end{math}-degenerate if every induced subgraph of \begin{math}G\end{math} has a vertex of degree at most \begin{math}k\end{math}. 
\end{definition}
The following propositions \ref{theorem 10}, \ref{theorem 12} and lemma \ref{lemma 11} are well known results about degenerate graphs and treewidth. For completeness, we include their proofs:
\begin{proposition}
\label{theorem 10}
A graph \begin{math}G=(V,E)\end{math} is \begin{math}k\end{math} degenerate if and only if there is a enumeration \begin{math}\{v_1,v_2,...,v_n\}=V\end{math} such that for all \begin{math}1\leq i \leq n \end{math}the degree of \begin{math}v_i\end{math} 
in the subgraph of \begin{math}G\end{math} induced by \begin{math}V-\{v_1,v_2,...,v_{i-1}\}\end{math} is at most \begin{math}k\end{math}.
\end{proposition}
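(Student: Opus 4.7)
The plan is to prove both directions by straightforward inductive/greedy arguments, exploiting the fact that $k$-degeneracy is a hereditary property (every induced subgraph of a $k$-degenerate graph is $k$-degenerate, directly from the definition).

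For the forward implication, I would construct the required enumeration greedily by repeated peeling. Since $G$ itself is $k$-degenerate, and $G$ is an induced subgraph of itself, it contains a vertex of degree at most $k$; pick any such vertex and call it $v_1$. The induced subgraph $G[V \setminus \{v_1\}]$ is again $k$-degenerate (since any induced subgraph of it is also an induced subgraph of $G$), so it contains a vertex of degree at most $k$, which we take to be $v_2$. Iterating this, after choosing $v_1, \ldots, v_{i-1}$ we pick $v_i$ to be any vertex of degree at most $k$ in $G[V \setminus \{v_1, \ldots, v_{i-1}\}]$, which exists by $k$-degeneracy. This produces the desired enumeration.

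For the backward implication, I would show that any induced subgraph has a low-degree vertex by looking at the earliest enumerated vertex it contains. Let $A \subseteq V$ be arbitrary and nonempty, and let $i$ be the smallest index with $v_i \in A$. Then $A \subseteq V \setminus \{v_1, \ldots, v_{i-1}\}$, so $G[A]$ is an induced subgraph of $G[V \setminus \{v_1, \ldots, v_{i-1}\}]$. By hypothesis, the degree of $v_i$ in $G[V \setminus \{v_1, \ldots, v_{i-1}\}]$ is at most $k$, and passing to an induced subgraph can only decrease this degree. Hence $v_i$ has degree at most $k$ in $G[A]$, establishing $k$-degeneracy.

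There is no real obstacle here; the proposition is a bookkeeping exercise about hereditariness and a minimal-index trick. The only point worth being careful about is making sure that in the backward direction I use the smallest index (not the largest, which would give a vertex potentially seeing many later neighbors) so that the induced-subgraph relation goes the right way.
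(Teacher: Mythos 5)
Your proof is correct and follows essentially the same route as the paper's: a greedy peeling argument for the forward direction and the minimal-index observation (with degrees only decreasing under passage to induced subgraphs) for the converse. No issues.
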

\begin{proof}
Let \begin{math}G=(V,E)\end{math} be a \begin{math}k\end{math} degenerate graph. From the definition, there is a vertex \begin{math}v\in V\end{math} with degree at most \begin{math}k\end{math}. Denote this vertex by \begin{math}v_1\end{math}. 
Define \begin{math}v_i\end{math} inductively: from the definition, the subgraph induced by \begin{math}V-\{v_1,...,v_{i-1}\}\end{math} has a vertex with degree at most \begin{math}k\end{math}. 
Define \begin{math}v_i\end{math} to be this vertex. Clearly, the enumeration \begin{math}\{v_1,...,v_n\}=V\end{math} has the desired property.\\
Conversely, let \begin{math}\{v_1,...,v_n\}\end{math} an enumeration as in the theorem, and let \begin{math}H\end{math} be a subgraph of \begin{math}G\end{math} induced by \begin{math}U\subseteq V\end{math}.
Denote \begin{math}u=v_i\end{math} the vertex in \begin{math}H\end{math} who's index in the enumeration is the smallest. Note that the degree of \begin{math}u\end{math} in \begin{math}G[U \cup \{v_j|j \geq i\}]\end{math} 
is at most \begin{math}k\end{math}, and \begin{math}H\end{math} is a subgraph of \begin{math}G[U \cup \{v_j|j \geq i\}]\end{math}, hence the degree of \begin{math}u\end{math} in \begin{math}H\end{math} is at most \begin{math}k\end{math}.
So \begin{math}G\end{math} is \begin{math}k\end{math} degenerate, as required.
\end{proof}

\begin{lemma}
\label{lemma 11}
A graph with treewidth at most \begin{math}k\end{math} has a vertex with degree at most \begin{math}k\end{math}.
\end{lemma}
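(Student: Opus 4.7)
The plan is to argue directly from a minimum-width tree decomposition of $G$. Recall that a tree decomposition of $G$ is a pair $(T,\{B_t\}_{t\in V(T)})$, where $T$ is a tree and each bag $B_t\subseteq V(G)$, satisfying: (i) every vertex lies in some bag, (ii) both endpoints of every edge lie in a common bag, and (iii) for each $v\in V(G)$ the set $\{t: v\in B_t\}$ induces a connected subtree of $T$. The width of the decomposition is $\max_t |B_t|-1$. Since $G$ has treewidth at most $k$, there is such a decomposition with every bag of size at most $k+1$.

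First I would dispose of the trivial case. If $T$ has a single node $t$, then $V(G)=B_t$, so $|V(G)|\leq k+1$ and every vertex has degree at most $k$.

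Otherwise $T$ has at least two nodes, and I would look at a leaf. Pick a leaf $t$ of $T$ with unique neighbour $t'$. If $B_t\subseteq B_{t'}$, simply delete $t$ from the decomposition: the result is still a valid tree decomposition of $G$ of width at most $k$, so we may assume that the leaf was chosen so that $B_t\setminus B_{t'}\neq\emptyset$ (iterate this reduction until $T$ is a single node, which is the trivial case above, or some leaf has this property). Take any $v\in B_t\setminus B_{t'}$.

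The key step is to observe that $v$ appears only in the bag $B_t$. Indeed, by connectedness (property (iii)), the set of bags containing $v$ is a subtree of $T$; since $v\notin B_{t'}$ and $t'$ is the only neighbour of the leaf $t$, this subtree cannot leave $t$. Consequently, by property (ii), every neighbour $u$ of $v$ in $G$ must also lie in $B_t$ (since the edge $\{u,v\}$ must be covered by some bag, and $v$ only appears in $B_t$). Therefore $v$ has at most $|B_t|-1\leq k$ neighbours, which completes the proof. The only subtlety, and the main thing to get right, is the reduction ensuring that some leaf bag is not contained in its parent's bag; everything else follows mechanically from the three axioms of a tree decomposition.
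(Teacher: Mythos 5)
Your proof is correct and follows essentially the same route as the paper's: take a leaf bag of a width-$k$ tree decomposition, find a vertex $v$ in it that does not occur in the neighbouring bag, and conclude that all neighbours of $v$ lie in that single bag of size at most $k+1$. In fact you are slightly more careful than the paper, which asserts without justification that such a vertex $v$ exists in the leaf bag; your reduction step (deleting leaves whose bags are contained in their parent's bag, and handling the one-node tree separately) is exactly what is needed to make that assertion true.
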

\begin{proof}
Let \begin{math}G\end{math} be a graph, and \begin{math}(T,X)\end{math} a tree decomposition of \begin{math}G\end{math} with width \begin{math}k\end{math}. 
Note that \begin{math}T\end{math} has a leaf, and there is a vertex \begin{math}v\end{math} in the bag corresponding to this 
leaf that is not in the bag corresponding to its neighbour. 
Thus every neighbour of \begin{math}v\end{math} in \begin{math}G\end{math} is in the same bag. But the bag is of size at most \begin{math}k+1\end{math}, so \begin{math}v\end{math} is of degree at most \begin{math}k\end{math}.
\end{proof}

\begin{proposition}
\label{theorem 12}
A graph \begin{math}G\end{math} with treewidth at most \begin{math}k\end{math} is \begin{math}k\end{math} degenerate.
\end{proposition}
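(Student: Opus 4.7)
The plan is to reduce to Lemma \ref{lemma 11} by observing that bounded treewidth is preserved under taking induced subgraphs, and then to invoke Proposition \ref{theorem 10} to get the degeneracy conclusion. Concretely, I would first recall the definition of $k$-degeneracy: every induced subgraph has a vertex of degree at most $k$. So if I can show that every induced subgraph of $G$ still has treewidth at most $k$, then Lemma \ref{lemma 11} applied to each such subgraph immediately yields the required low-degree vertex.

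The first step is therefore the standard restriction lemma for tree decompositions. Given a tree decomposition $(T, X)$ of $G$ of width $k$ with bags $\{X_t\}_{t \in V(T)}$, and given any $U \subseteq V(G)$, I would define a new family of bags $X'_t = X_t \cap U$ on the same tree $T$. One then checks the three tree decomposition axioms: every vertex of $U$ appears in some bag (because it appeared in some bag of the original decomposition), every edge of $G[U]$ is covered (because the edge was covered in the original decomposition, and both endpoints lie in $U$), and the running intersection property for each vertex of $U$ is inherited directly. Since each new bag is a subset of the corresponding original bag, the width of the new decomposition is at most $k$.

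Once I have that $G[U]$ has treewidth at most $k$ for every $U \subseteq V(G)$, I apply Lemma \ref{lemma 11} to $G[U]$ to obtain a vertex of degree at most $k$ in $G[U]$. This is precisely the defining property of $k$-degeneracy, so $G$ is $k$-degenerate. Alternatively, one may equivalently construct the elimination order promised by Proposition \ref{theorem 10}: repeatedly pick a vertex of degree at most $k$ in the current induced subgraph (using Lemma \ref{lemma 11} on the residual graph, which still has treewidth at most $k$), remove it, and iterate.

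The only mild obstacle is the verification of the tree-decomposition axioms for the restricted bags $X'_t$; this is routine but must be stated cleanly. Apart from that, the argument is a short deduction from Lemma \ref{lemma 11} together with the hereditary nature of bounded treewidth under induced subgraphs.
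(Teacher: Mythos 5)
Your argument is correct and takes essentially the same route as the paper: the paper likewise observes that every induced subgraph of $G$ has treewidth at most $k$ and then applies Lemma \ref{lemma 11} to obtain the low-degree vertex required by the definition of $k$-degeneracy. The only difference is that you explicitly verify the restricted tree decomposition $X'_t = X_t \cap U$, a step the paper simply asserts without proof.
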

\begin{proof}
Let \begin{math}H\end{math} be an induced subgraph of \begin{math}G\end{math}. If \begin{math}H=G\end{math}, then \begin{math}H\end{math} has a vertex of degree at most \begin{math}k\end{math} by the previous lemma.
 If \begin{math}H\end{math} is a proper subgraph, note that \begin{math}H\end{math} has treewidth at most \begin{math}k\end{math}, so \begin{math}H\end{math} has a vertex of degree at most \begin{math}k\end{math}. So \begin{math}G\end{math} is \begin{math}k\end{math} degenerate.
\end{proof}\\
The following proposition shows that a \begin{math}k\end{math} degenerate graph has a large independent set:
\begin{proposition}
Every \begin{math}k\end{math} degenerate graph \begin{math}G\end{math} has an independent set of size \begin{math}{\left\lceil \frac{|V|}{k+1} \right\rceil}\end{math}.
\end{proposition}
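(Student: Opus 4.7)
The plan is to argue by induction on $n = |V(G)|$, using the fact that $k$-degeneracy is closed under taking induced subgraphs (which is immediate from the definition). For the base case $n \leq k+1$, the target size is $\lceil n/(k+1) \rceil \leq 1$, so any single vertex suffices (and the empty set handles $n=0$).

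For the inductive step, I would use the definition of $k$-degeneracy to pick a vertex $v \in V(G)$ with $\deg(v) \leq k$, and then form the induced subgraph $G' = G[V \setminus N_G(v)]$. Because $N_G(v)$ is the closed neighborhood in the paper's convention, we have $|V(G')| \geq n - (k+1)$, and $G'$ is still $k$-degenerate. The inductive hypothesis yields an independent set $S' \subseteq V(G')$ of size at least $\lceil (n-k-1)/(k+1) \rceil$. Since every vertex of $S'$ lies outside $N_G(v)$, no vertex of $S'$ is adjacent to $v$, so $S = S' \cup \{v\}$ is independent in $G$.

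The last step is the ceiling identity
\[
\left\lceil \frac{n-k-1}{k+1} \right\rceil + 1 \;=\; \left\lceil \frac{n-k-1}{k+1} + 1 \right\rceil \;=\; \left\lceil \frac{n}{k+1} \right\rceil,
\]
which uses only the fact that adding an integer commutes with taking the ceiling. I do not anticipate a genuine obstacle here; the only point that demands any care is this arithmetic at the end. An alternative route would be to invoke Proposition \ref{theorem 10} directly and run a greedy argument on the enumeration $v_1, \ldots, v_n$ in reverse, but this is essentially the same argument packaged differently and carries no advantage over the inductive proof sketched above.
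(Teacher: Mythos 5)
Your proof is correct and takes essentially the same approach as the paper: the paper's argument is the same greedy removal of closed neighbourhoods of low-degree vertices, written as an explicit iteration along the degeneracy ordering of Proposition \ref{theorem 10} rather than as an induction, and the closing ceiling arithmetic is identical in substance. The alternative you mention at the end is in fact the paper's packaging, and as you say the two are interchangeable.
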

\begin{proof}
Let \begin{math}G=(V,E)\end{math} be a \begin{math}k\end{math} degenerate graph, and \begin{math}\{v_1,...,v_n\}\end{math} be an enumeration as in proposition \ref{theorem 10}. Let $I_0=\emptyset$, and $H_0=G$. We construct an independent set inductively as follows. There exists $1\leq l \leq n$ and an increasing sequence $i_1,i_2,...,i_l$ in $\{1,2,...,n\}$  with $i_1=1$ such that for all $1\leq j \leq l$
\begin{align*}
I_j=I_{j-1}\cup \{v_{i_j}\}\\
H_j=G[V(H_{j-1})-N_G(v_j)]
\end{align*}
with $I_l$ an independent set in $G$ and $l\geq {\left\lceil \frac{n}{k+1} \right\rceil}$.

Indeed, $I_1=\{v_1\}$ and $H_1=G[V(G)-N_G(v_1)]$. Clearly, $I_1$ is an independent set, and note that $|V(H_1)|\geq n-(k+1)$ and no vertex in $H_1$ is a neighbour of the vertex in $I_1$. Now, given an independent set $I_j$ and an induced subgraph $H_j$ of $G$ such that $|V(H_j)|\geq n-j(k+1)$ and no vertex of $H_j$ is a neighbour of a vertex in $I_j$, select $v_{i_{j+1}}\in V(H_j)$ with minimal index. Now $I_{j+1}$ is an independent set, no vertex of $H_{j+1}$ is a neighbour of a vertex in $I_{j+1}$, and since $deg_{H_j}(v_{i_{j+1}})\leq k$,\\ $|V(H_{j+1})|\geq |V(H_j)|-(k+1)\geq n-(j+1)(k+1)$.\\

The induction stops when no more vertices can be selected, i.e. when $V(H_l)=\emptyset$. From the induction, we have that $0=|V(H_l)|\geq n-l(k+1)$ and hence $l\geq{\left\lceil \frac{n}{k+1} \right\rceil}$ as required.
\end{proof}\\
Combining this proposition with theorem \ref{theorem classes}, we can show that many non trivial graph polynomials are weakly distinguishing:
\begin{corollary}
Fix \begin{math}k\in \NN\end{math}, and let \begin{math}Q\end{math} be a class of \begin{math}k\end{math} degenerate graphs. Then \begin{math}P_Q\end{math} is weakly distinguishing. 
\end{corollary}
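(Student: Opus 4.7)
The plan is to apply Theorem \ref{theorem classes} directly, using the preceding proposition to produce the required independence function for $Q$.

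First, I would observe that because every graph in $Q$ is $k$-degenerate, the preceding proposition guarantees that any $G \in Q$ of order $n$ contains an independent set of size $\lceil n/(k+1) \rceil$. Thus the function
\[
f(n) = \left\lceil \frac{n}{k+1} \right\rceil
\]
is an independence function for $Q$ in the sense of Definition \ref{definition ind function}.

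Next, I would check the hypothesis of Theorem \ref{theorem classes}: clearly $f(n) \geq n/(k+1)$ for all $n \in \NN$, so the constant $a = k+1$ (which is fixed once $k$ is fixed) witnesses the required inequality. Invoking Theorem \ref{theorem classes} then immediately yields that $P_Q$ is weakly distinguishing.

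There is essentially no obstacle here beyond unpacking the definitions: the work has already been done in the preceding proposition on independent sets in $k$-degenerate graphs and in Theorem \ref{theorem classes}. The only mild point of care is to note that the conclusion applies to any subclass $Q$ of $k$-degenerate graphs (not merely the class of all $k$-degenerate graphs), because the independence bound from the proposition depends only on the degeneracy of the graph and so holds uniformly over any such $Q$.
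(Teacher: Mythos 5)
Your proposal is correct and matches the paper's intended argument exactly: the paper gives no separate proof of this corollary, but derives it precisely by combining the preceding proposition (that every $k$-degenerate graph of order $n$ has an independent set of size $\left\lceil \frac{n}{k+1} \right\rceil$) with Theorem \ref{theorem classes} applied with $a=k+1$. Your additional remark that the bound holds uniformly over any subclass $Q$ of $k$-degenerate graphs is a sensible clarification consistent with the paper's statement.
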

\begin{corollary}
Fix \begin{math}k\in \NN\end{math}, and let \begin{math}Q\end{math} be a class of graphs with treewidth at most \begin{math}k\end{math}. Then \begin{math}P_Q\end{math} is weakly distinguishing.
\end{corollary}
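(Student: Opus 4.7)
The plan is to derive this corollary as a direct consequence of the previous corollary about $k$-degenerate classes, using Proposition \ref{theorem 12} as the bridge. Since the statement only differs from the preceding corollary by replacing ``$k$-degenerate'' with ``treewidth at most $k$'', essentially no new combinatorial input is needed beyond what has already been assembled.

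Concretely, I would first invoke Proposition \ref{theorem 12} to observe that every graph in $Q$ is $k$-degenerate. This means $Q$ is contained in the class of $k$-degenerate graphs, so the previous proposition supplies, for each $G \in Q$ of order $m$, an independent set of size $\lceil m/(k+1)\rceil$ inside $G$. Hence the function $f(n) = \lceil n/(k+1)\rceil$ is an independence function for $Q$, and it satisfies $f(n) \geq n/a$ with the fixed constant $a = k+1 \in \NN$, which is exactly the hypothesis of Theorem \ref{theorem classes}.

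Having verified that hypothesis, I would then apply Theorem \ref{theorem classes} to conclude that $P_Q$ is weakly distinguishing. Alternatively, one can shortcut the argument by noting that the preceding corollary already gives the conclusion for every class of $k$-degenerate graphs, and Proposition \ref{theorem 12} says the class of graphs of treewidth at most $k$ is such a class, so $P_Q$ is weakly distinguishing immediately.

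Since all the machinery is already in place, there is no real obstacle here; the only thing to be careful about is that Theorem \ref{theorem classes} requires the independence function to be defined for arbitrary $n$ with the bound $f(n) \geq n/a$ uniformly, and this is automatic from $f(n) = \lceil n/(k+1)\rceil$. Thus the proof is essentially a one-line reduction via Proposition \ref{theorem 12} to the $k$-degenerate corollary.
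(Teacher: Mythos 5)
Your proposal is correct and follows exactly the route the paper intends: Proposition \ref{theorem 12} reduces treewidth at most $k$ to $k$-degeneracy, the independent-set proposition supplies the independence function $f(n)=\lceil n/(k+1)\rceil\geq n/(k+1)$, and Theorem \ref{theorem classes} with $a=k+1$ finishes the argument. No differences worth noting.
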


\section{The Harmonious and k-Harmonious Polynomials}

\begin{definition}
For a graph \begin{math}G\end{math}, a harmonious colouring in \begin{math}k\end{math} colours is a function \begin{math}f:V(G) \goesto [k]\end{math} 
such that \begin{math}f\end{math} is a proper colouring, and for all \begin{math}i,j\in [k]\end{math}, \begin{math}G[f^{-1}(i)\cup f^{-1}(j)]\end{math} has at most one edge. 
Denote by \begin{math}\chi_{harm}(G,\lambda)\end{math} the number of \begin{math}\lambda\end{math} harmonious colourings of \begin{math}G\end{math}.
 Then \begin{math}\chi_{harm}\end{math} is a polynomial in \begin{math}\lambda\end{math}, as shown in \cite{makowsky2006polynomial} 
 and \cite{godlin2008evaluations}. \begin{math}\chi_{harm}\end{math} is called the harmonious polynomial.
\end{definition}

For more on the harmonious polynomial, see \cite{drgas2017harmonious}. Theorem \ref{theorem harmonious} was observed without proof in \cite{drgas2017harmonious}.\\
{\bf Theorem \ref{theorem harmonious}:} Almost all graphs \begin{math}G\end{math} have a \begin{math}\chi_{harm}\end{math}-mate.\\

\begin{proof}
Let \begin{math}\mathcal{C}\end{math} be the class of graphs \begin{math}G\end{math} that have the property that for every two vertices \begin{math}v,u\in V(G)\end{math}, 
there is a vertex \begin{math}w\in V(G)\end{math} such that \begin{math}w\end{math} is a neighbour of both \begin{math}v\end{math} and \begin{math}u\end{math}. 
This property is one of Gaifman's extension axioms, and hence from Fagin's proof of the 0/1-law for first order logic, 
almost all graphs are in \begin{math}\mathcal{C}\end{math}(see \cite{fagin1976probabilities} for details).\\

Note that any harmonious colouring of a graph \begin{math}G\in \mathcal{C}\end{math} of order \begin{math}n\end{math} 
has to assign a different colour to each vertex of \begin{math}G\end{math}, and so for \begin{math}\lambda \in \NN\end{math} the evaluation 
of the harmonious polynomial of \begin{math}G\end{math} at \begin{math}\lambda\end{math} is \begin{math}\chi_{harm}(G,\lambda)=\lambda(\lambda-1)(\lambda-2)...(\lambda-n+1)\end{math}. 
Since this is true for every \begin{math}\lambda \in \NN\end{math}, by interpolation it is true for every \begin{math}\lambda\in \RR\end{math}, 
and so every two graphs in \begin{math}\mathcal{C}\end{math} of the same order have the same harmonious polynomial. 
Hence, there is an \begin{math}n_0\end{math} such that all graphs in \begin{math}\mathcal{C}\end{math} of order greater than \begin{math}n_0\end{math} have an \begin{math}\chi_{harm}\end{math}-mate. 
Thus the harmonious polynomial is weakly distinguishing.
\end{proof}

This result can be easily generalised.
\begin{definition}
For a fixed \begin{math}k \in \NN\end{math} and a graph \begin{math}G\end{math}, 
we say that a proper colouring of \begin{math}G\end{math} with \begin{math}\lambda\end{math} colours \begin{math}f:V(G)\goesto [\lambda]\end{math}  is \begin{math}k\end{math}-harmonious 
if for every \begin{math}S\subseteq [\lambda]\end{math} such that \begin{math}|S|=k\end{math}, \begin{math}S\end{math} appears as the colour set of a clique in the graph at most once, 
i.e. if \begin{math}\{v_1,v_2,...,v_k\},\{u_1,...,u_k\}\subseteq V(G)\end{math} induce complete graphs of size \begin{math}k\end{math} and 
\begin{math}f( \{v_1,v_2,...,v_k\})=f(\{u_1,...,u_k\})\end{math}, then \begin{math}\{v_1,v_2,...,v_k\}=\{u_1,...,u_k\}\end{math}.
\end{definition}

For \begin{math}\lambda \in \NN\end{math} define \begin{math}h_k(G,\lambda)=|\{f:V(G)\goesto [\lambda]:f\end{math} is proper and \begin{math}k\end{math}-harmonious\begin{math}\}|\end{math}. 
\begin{math}h_k(G,\lambda)\end{math} is a polynomial in \begin{math}\lambda\end{math} (again, by  \cite{makowsky2006polynomial}). 
We will prove  that \begin{math}h_k\end{math} is weakly distinguishing.\\

We start with a lemma:

\begin{lemma}
Let \begin{math}\mathcal{C}\end{math} be the class of graphs  \begin{math}G\end{math} with the property that for every two vertices \begin{math}v,u \in V(G)\end{math} 
there are vertices \begin{math}w_1,...,w_k\in V(G)\end{math} such that \begin{math}\{u,w_1,...,w_k\}\end{math} and \begin{math}\{v,w_1,...,w_k\}\end{math} induce a complete graph. 
Then almost all graphs are in \begin{math}\mathcal{C}\end{math}.
\end{lemma}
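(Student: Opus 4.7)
The plan is to mimic the strategy already used in the proof of Theorem \ref{theorem harmonious}: recognize the defining property of $\mathcal{C}$ as one of Gaifman's extension axioms and invoke Fagin's 0/1--law for first-order logic on finite graphs. The condition $G \in \mathcal{C}$ is expressed by the first-order sentence
\begin{displaymath}
\forall u\, \forall v\, \Big(u \neq v \rightarrow \exists w_1 \cdots \exists w_k\, \phi(u,v,w_1,\ldots,w_k)\Big),
\end{displaymath}
where $\phi$ asserts that $w_1,\ldots,w_k$ are pairwise distinct and distinct from $u,v$, that the $w_i$ are pairwise adjacent, and that each $w_i$ is adjacent to both $u$ and $v$. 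This is precisely the shape of a Gaifman extension axiom, so Fagin's theorem guarantees that it holds in almost all finite graphs, generalizing verbatim the $k=1$ case used in Theorem \ref{theorem harmonious}.

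For a self-contained probabilistic proof, I would work in $G(n,1/2)$ and fix two distinct vertices $u,v$. For any fixed $k$-subset $W \subseteq V \setminus \{u,v\}$, the probability that $W$ witnesses the extension axiom for the pair $(u,v)$ is exactly $p := 2^{-\binom{k}{2} - 2k}$, accounting for the $\binom{k}{2}$ edges internal to $W$ plus the $2k$ edges from $W$ to $\{u,v\}$. To sidestep dependencies, I would partition $V \setminus \{u,v\}$ into $\lfloor (n-2)/k \rfloor$ disjoint blocks of size $k$; the events \emph{``block $W$ is a witness for $(u,v)$''} are then mutually independent, so the failure probability for a single pair is at most $(1-p)^{\lfloor (n-2)/k \rfloor}$, which decays exponentially in $n$. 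A union bound over the fewer than $n^2$ ordered pairs $(u,v)$ shows that the probability a random graph fails to lie in $\mathcal{C}$ is $o(1)$.

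The only subtlety is the passage from $G(n,1/2)$, which is uniform on \emph{labeled} graphs, to the statement about ``almost all graphs'' as isomorphism classes counted by $\mathcal{G}(n)$. This is handled by the classical observation, implicit already in the estimate $|\mathcal{G}(n)| \approx 2^{\binom{n}{2}}/n!$ cited in the Preliminaries, that almost all labeled graphs are rigid; consequently any isomorphism-invariant property holding for a $(1-o(1))$-fraction of labeled graphs also holds for a $(1-o(1))$-fraction of unlabeled graphs. No genuine obstacle arises: the only bookkeeping to watch is the edge count $\binom{k}{2}+2k$ in $\phi$, and the fact that for fixed $k$ the exponential decay $(1-p)^{\Omega(n)}$ comfortably absorbs the quadratic union bound.
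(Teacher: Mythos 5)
Your proof is correct, but it takes a genuinely different route from the paper's. The paper argues in two stages: first it uses the multiplicative Chernoff bound to show that with high probability every pair $u,v$ has at least $\frac{1}{9}(n-2)p^2$ common neighbours, and then it invokes a theorem of Bollob\'as on the concentration of the number of $k$-cliques in a random graph to conclude that the induced subgraph $G_{u,v}$ on the common neighbourhood contains a $k$-clique; a union bound over pairs finishes the argument. Your block-partition argument is more elementary and self-contained: by splitting $V\setminus\{u,v\}$ into $\lfloor (n-2)/k\rfloor$ edge-disjoint potential witness sets, each succeeding independently with the constant probability $2^{-\binom{k}{2}-2k}$, you get an explicit $(1-p)^{\Omega(n)}$ failure bound per pair with no appeal to Chernoff, no external clique-counting theorem, and no conditioning on the size of $G_{u,v}$ (a step the paper handles somewhat delicately). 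You are also more careful than the paper about the passage from labelled to unlabelled graphs via rigidity. Two minor caveats: the defining sentence of $\mathcal{C}$ for $k\geq 2$ is not literally ``the shape of a Gaifman extension axiom'' (those assert the existence of a single vertex with prescribed adjacencies); it is a first-order sentence that is a \emph{consequence} of the extension axioms (build the clique one $w_i$ at a time), which is what Fagin's theorem actually requires --- the conclusion stands, and your self-contained probabilistic argument makes the point moot in any case.
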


For convenience, we restate and prove the lemma in probabilistic language:

\begin{lemma}
Fix \begin{math}p\in (0,1)\end{math} and let \begin{math}G\in \mathcal{G}(n,p)\end{math} (i.e. \begin{math}G\end{math} is a graph with \begin{math}n\end{math} 
vertices and every edge is in the graph with probability \begin{math}p\end{math}, independently of the others). 
Then \begin{math}\lim_{n\goesto \infty}\PP(G\in \mathcal{C})=1\end{math}
\end{lemma}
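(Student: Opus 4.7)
The plan is to apply a standard extension-axiom argument in the style of Fagin. Fix a pair of distinct vertices $u,v \in V(G)$ and partition (most of) $V(G)\setminus\{u,v\}$ into $m = \lfloor (n-2)/k \rfloor$ pairwise disjoint $k$-element sets $W_1,\ldots,W_m$. For each $i$, let $E_i$ be the event that $\{u\}\cup W_i$ and $\{v\}\cup W_i$ both induce complete subgraphs of $G$. This requires the $\binom{k}{2}$ edges inside $W_i$ together with the $2k$ edges joining $W_i$ to $\{u,v\}$ to all be present, so $\PP(E_i) = p^{\binom{k}{2}+2k} =: \pi_k$, a positive constant depending only on $k$ and $p$.

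Since the edge sets defining $E_1,\ldots,E_m$ are pairwise disjoint, the events $E_i$ are mutually independent, and the probability that no $W_i$ witnesses the required property for the chosen pair $u,v$ is at most $(1-\pi_k)^m$, which decays exponentially in $n$. A union bound over the $\binom{n}{2}$ unordered pairs of distinct vertices then yields
\[
\PP(G \notin \mathcal{C}) \;\leq\; \binom{n}{2}(1-\pi_k)^{\lfloor (n-2)/k \rfloor},
\]
and the right-hand side tends to $0$ as $n \to \infty$ because the exponential factor dominates the polynomial one.

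The only delicate point is to insist on a \emph{disjoint} family of candidate witness sets, which is what makes the per-pair trials independent. Using all $\binom{n-2}{k}$ candidate $k$-sets at once would produce heavily correlated events for which a crude union bound would not suffice; passing to a disjoint sub-family costs only a factor of $k$ in the exponent, which is negligible against the geometric decay. Aside from this bookkeeping, the argument is a direct generalization of the $k=1$ case treated in the proof of Theorem~\ref{theorem harmonious}.
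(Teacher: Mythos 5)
Your proof is correct, but it takes a genuinely different route from the paper's. You fix a pair $u,v$, carve $V(G)\setminus\{u,v\}$ into $\lfloor (n-2)/k\rfloor$ disjoint $k$-sets, observe that each set is a witness with constant probability $p^{\binom{k}{2}+2k}$ independently of the others, and conclude that the failure probability per pair decays geometrically in $n$, which a union bound over $\binom{n}{2}$ pairs cannot overcome. The paper instead argues in two stages: it first uses a multiplicative Chernoff bound to show that with high probability every pair $u,v$ has at least $\tfrac{1}{9}(n-2)p^2$ common neighbours, and then invokes a concentration result of Bollob\'as on the number of $k$-cliques in a random graph to show that the subgraph induced by that common neighbourhood almost surely contains a $k$-clique, finishing with a union bound and a conditioning decomposition. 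Your argument is more elementary and self-contained --- it needs nothing beyond independence of disjoint edge sets, and your remark about passing to a disjoint family of candidate witnesses is exactly the right care to take, since it is what makes the per-pair trials independent and the geometric bound valid. The paper's route imports two external tools and has to handle the conditioning on the event that all common neighbourhoods are large, which makes it somewhat heavier for the same conclusion; what it buys is an explicit quantitative picture of the common neighbourhood itself, which is not needed for the lemma. Both proofs establish the statement.
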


\begin{proof}
For a graph $G$, denote by $Y_k(G)$ the number of $k$ cliques in $G$.
For fixed \begin{math}u,v\in V(G)\end{math}, denote by \begin{math}G_{u,v}\end{math} the subgraph of \begin{math}G\end{math} induced by the common neighbours of \begin{math}v\end{math} and \begin{math}u\end{math}. 
Note that \begin{math}\EE(|V(G_{u,v})|)=(n-2)p^2\end{math}, so from the multiplicative Chernoff bound,
\begin{align*}
\PP[|V(G_{u,v})|\leq \frac{1}{9}(n-2)p^2]\leq \left(\frac{e^{-9/10}}{(1/9)^{1/9}}\right)^{(n-2)p^2}<e^{-\frac{1}{200}(n-2)p^2}
\end{align*} 

Hence, from the union bound 

\begin{displaymath}\PP[\exists u,v\in V(G) s.t. |V(G_{u,v})|\leq \frac{1}{9}(n-2)p^2]\leq {n \choose 2}e^{-\frac{1}{200}(n-2)p^2}\end{displaymath}

Denote this number \begin{math}r_p(n)\end{math}, and note that \begin{math}\lim_{n\goesto \infty} r_p(n)=0\end{math}.\\

Next, assume that \begin{math}\forall u,v\in V(G)\end{math}, \begin{math}|V(G_{u,v})|>\frac{1}{9}(n-2)p^2\end{math}. Fix \begin{math}u,v\in V(G)\end{math}. Then \begin{math}G_{u,v}\end{math} is a random graph with more than \begin{math}\frac{1}{9}(n-2)p^2\end{math} vertices, and hence \begin{math}\PP[Y_k(G_{u,v})< 1]\leq \PP[Y_k(G')< 1]\end{math} were \begin{math}G'\in\mathcal{G}(\frac{1}{9}(n-2)p^2,p)\end{math}.
From theorem 2 in \cite{bollobas1988chromatic}, we have:

\begin{displaymath}\PP[Y_k(G')<1]\leq \PP \left[ Y_k(G')\leq\frac{9}{10}\left(\frac{1}{9}(n-2)p^2\right)^{3/2}\right]
\leq \exp\left[ -(1/100  +  \alpha \sqrt{\frac{1}{9}(n-2)p^2})\right]\end{displaymath}

for some constant \begin{math}\alpha>0\end{math}. Hence, if we denote \begin{math}\mathcal{A}=\{G:\exists u,v\in V(G) s.t. |V(G_{u,v})|\leq\frac{1}{9}(n-2)p^2\}\end{math}, from the union bound we have:
\begin{displaymath}
\PP \left[ \exists u,v\in V(G)s.t. Y_k(G_{u,v})<1|G\not \in \mathcal{A}\right] \leq {n \choose 2}\exp\left[ -(1/100  +  \alpha \sqrt{\frac{1}{9}(n-2)p^2})\right]
\end{displaymath}

Denote the right side of this inequality by \begin{math}r'(n)\end{math}, and note that \begin{math}r'(n)\goesto 0\end{math} as \begin{math}n \goesto \infty\end{math}.

To conclude, we have that
\begin{align*}
\PP[G \not \in \mathcal{C}]=\PP[G\in \mathcal{A}\cap \mathcal{C}^c]+\PP[G\in\mathcal{A}^c \cap\mathcal{C}^c]\leq\\ \leq\PP[G\in \mathcal{A}]+\PP[G\in \mathcal{A}^c]\PP[G\in \mathcal{C}^c|G\in \mathcal{A}^c]=r_p(n)+(1-r_p(n))r'_p(n)
\end{align*}

and both these terms tend to 0 as \begin{math}n\end{math} tends to infinity.
\end{proof}

We can now prove:

\begin{theorem}
For a fixed \begin{math}k\in\NN\end{math}, \begin{math}h_k\end{math} is weakly distinguishing.
\end{theorem}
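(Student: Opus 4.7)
The plan is to mimic the proof of Theorem \ref{theorem harmonious} almost verbatim, using the class \begin{math}\mathcal{C}\end{math} supplied by the preceding lemma in place of Gaifman's original extension axiom. Since almost all graphs lie in \begin{math}\mathcal{C}\end{math}, by Lemma \ref{lemma 1} it suffices to show that any two graphs in \begin{math}\mathcal{C}\end{math} of the same order have the same \begin{math}k\end{math}-harmonious polynomial. I will do this by proving that for \begin{math}G\in\mathcal{C}\end{math} of order \begin{math}n\end{math} the only proper \begin{math}k\end{math}-harmonious colourings are the injective maps \begin{math}V(G)\goesto[\lambda]\end{math}, which forces \begin{math}h_k(G,\lambda)=\lambda(\lambda-1)\cdots(\lambda-n+1)\end{math}.

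The key step is the following combinatorial claim: if \begin{math}G\in\mathcal{C}\end{math} and \begin{math}f:V(G)\goesto[\lambda]\end{math} is a proper \begin{math}k\end{math}-harmonious colouring, then \begin{math}f\end{math} is injective. Suppose for contradiction that \begin{math}f(u)=f(v)\end{math} for distinct \begin{math}u,v\in V(G)\end{math}. By definition of \begin{math}\mathcal{C}\end{math} there exist \begin{math}w_1,\ldots,w_k\in V(G)\end{math} such that both \begin{math}\{u,w_1,\ldots,w_k\}\end{math} and \begin{math}\{v,w_1,\ldots,w_k\}\end{math} induce \begin{math}(k+1)\end{math}-cliques. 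In particular \begin{math}\{u,w_1,\ldots,w_{k-1}\}\end{math} and \begin{math}\{v,w_1,\ldots,w_{k-1}\}\end{math} are two distinct \begin{math}k\end{math}-cliques of \begin{math}G\end{math}; since \begin{math}f\end{math} is proper on each \begin{math}(k+1)\end{math}-clique, the colours \begin{math}f(w_1),\ldots,f(w_{k-1})\end{math} together with \begin{math}f(u)=f(v)\end{math} form a \begin{math}k\end{math}-element set that is the colour set of both cliques, contradicting the \begin{math}k\end{math}-harmonious condition.

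Given the claim, every injective \begin{math}V(G)\goesto[\lambda]\end{math} is automatically proper and \begin{math}k\end{math}-harmonious (distinct vertices carry distinct colours, so different cliques have different colour sets), so \begin{math}h_k(G,\lambda)\end{math} equals the falling factorial \begin{math}\lambda(\lambda-1)\cdots(\lambda-n+1)\end{math} for all \begin{math}\lambda\in\NN\end{math}. Since \begin{math}h_k\end{math} is a polynomial in \begin{math}\lambda\end{math}, interpolation upgrades this to a polynomial identity. Consequently any two graphs of the same order in \begin{math}\mathcal{C}\end{math} share the same \begin{math}h_k\end{math}, so for all sufficiently large \begin{math}n\end{math} every \begin{math}G\in\mathcal{C}\end{math} of order \begin{math}n\end{math} has an \begin{math}h_k\end{math}-mate, and Lemma \ref{lemma 1} completes the argument.

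The only real content is the injectivity claim above; the probabilistic effort has already been absorbed into the lemma preceding the theorem, and the interpolation trick is identical to the one used for \begin{math}\chi_{harm}\end{math}. The main obstacle is the conceptual one of recognizing the correct Gaifman-type extension axiom that forces injectivity of \begin{math}k\end{math}-harmonious colourings, namely the existence of two \begin{math}(k+1)\end{math}-cliques sharing a \begin{math}(k-1)\end{math}-clique and differing exactly in \begin{math}\{u,v\}\end{math}; once that axiom is identified and shown to hold almost surely, the rest is a direct generalisation of the \begin{math}k=2\end{math} case.
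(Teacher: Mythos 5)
Your proposal is correct and follows essentially the same route as the paper: use the class \begin{math}\mathcal{C}\end{math} from the preceding lemma, show any \begin{math}k\end{math}-harmonious colouring of a graph in \begin{math}\mathcal{C}\end{math} must be injective, conclude \begin{math}h_k(G,\lambda)=\lambda(\lambda-1)\cdots(\lambda-n+1)\end{math} by interpolation, and finish as in Theorem \ref{theorem harmonious}. If anything, your version is slightly more careful than the paper's, since you pass from the two \begin{math}(k+1)\end{math}-cliques to the distinct \begin{math}k\end{math}-cliques \begin{math}\{u,w_1,\ldots,w_{k-1}\}\end{math} and \begin{math}\{v,w_1,\ldots,w_{k-1}\}\end{math} before invoking the \begin{math}k\end{math}-harmonious condition, which is stated for cliques of size exactly \begin{math}k\end{math}.
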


\begin{proof}
Let \begin{math}\mathcal{C}\end{math} be the same as in the lemma. Similarly to the previous theorem, note that if \begin{math}G\in\mathcal{C}\end{math}, 
then for \begin{math}\lambda\in \NN\end{math} any \begin{math}k\end{math}-harmonious colouring \begin{math}f:V(G) \goesto [\lambda]\end{math} of \begin{math}G\end{math} must assign 
a different colour to every vertex (otherwise, if \begin{math}v,u \in V(G)\end{math} are such that 
\begin{math}f(v)=f(u)\end{math}, and \begin{math}w_1,...,w_k\in V(G)\end{math} are such that \begin{math}\{u,w_1,...,w_k\}\end{math} and \begin{math}\{v,w_1,...,w_k\}\end{math} 
induce a complete graph in \begin{math}G\end{math}, then \begin{math}f(\{u,w_1,...,w_k\})=f(\{v,w_1,...,w_k\})\end{math}), 
and thus \begin{math}h_k(G,\lambda)=\lambda(\lambda-1)(\lambda-2)...(\lambda-n+1)\end{math}. 
By the same reasoning as in the proof of theorem \ref{theorem harmonious}, \begin{math}h_k\end{math} is weakly distinguishing.
\end{proof}

\section{Conclusion}
We have shown for many graph properties \begin{math}Q\end{math} that the polynomials \begin{math}P_Q\end{math} are weakly distinguishing, including the well studied clique and independence polynomials. We have also shown that the harmonious and k-harmonious polynomials are weakly distinguishing.\\
Our results relied on the fact that the number of polynomials that can be the \begin{math}P_Q\end{math} polynomial of a graph is small, and on the fact that almost all graphs have properties that imply their harmonious and k-harmonious polynomials are trivial. This does not seem to be the case for the Tutte and the chromatic polynomials, so the original question of whether they, as well as the characteristic and matching 
polynomials, are weakly distinguishing remains open.

\begin{problem}
Find a graph property $Q$ such that the fraction of $P_Q$ unique graphs is strictly positive. 
\end{problem}
\begin{problem}
Let $P$ be a graph polynomial, $G,H$ two graphs, and write $G\sim_P H$ if $P(G)=P(H)$. What can be said about the sizes of the equivalence classes of $\sim_P$?
\end{problem}
\bibliography{Quotes}
\bibliographystyle{plain}

\end{document}